\documentclass{amsart}
\usepackage{amsfonts,amssymb,amscd,amsmath,enumerate,verbatim,calc}
\usepackage[all]{xy}
\usepackage{mathtools}

\newcommand{\bP}{\mathbb{\partial}}
\newcommand{\cB}{\mathcal{B}}

\newcommand{\m}{\mathfrak{m} }

\newcommand{\Bp}{\mathbf{\partial} }
\newcommand{\Bx}{\mathbf{X} }

\newcommand{\Z}{\mathbb{Z} }

\newcommand{\rt}{\rightarrow}

\newcommand{\Supp}{\operatorname{Supp}}

\newcommand{\Ass}{\operatorname{Ass}}

\theoremstyle{plain}

\newtheorem{theorem}{Theorem}[section]
\newtheorem{corollary}[theorem]{Corollary}
\newtheorem{lemma}[theorem]{Lemma}
\newtheorem{proposition}[theorem]{Proposition}

\theoremstyle{definition}
\newtheorem{definition}[theorem]{Definition}

\newtheorem{remark}[theorem]{Remark}

\theoremstyle{remark}

\begin{document}

\title[\MakeLowercase{de} Rham]{On a Relation between Euler characteristics of \MakeLowercase{de} Rham cohomology and Koszul cohomology of graded local cohomology modules}

\author{Tony~J.~Puthenpurakal}
\author{Rakesh B. T. Reddy  }
\date{\today}
\address{Department of Mathematics, IIT Bombay, Powai, Mumbai 400 076}

\email{tputhen@gmail.com}

\address{Department of Mathematics, Faculty of Science and Technology(IcfaiTech), ICFAI Foundation for Higher Educaion, Hyderabad}
\email{rakesht@ifheindia.org}

 \subjclass{Primary 13D45; Secondary 13N10 }
\keywords{local cohomology, D modules, Koszul
cohomology, de Rham cohomology}
\begin{abstract}

Let $K$ be a  field of characteristic zero.
Let $R = K[X_0, X_1,\ldots,X_n]$ be standard graded.  Let
$A_{n+1}(K)$ be the $(n + 1)^{th}$ Weyl algebra over $K$.
Let $I$ be a homogeneous ideal of $R$ and let $M  = H^i_I(R)$ for some $i \geq 0$.
 By a result of
Lyubeznik, $M$ is a graded holonomic
$A_{n +1}(K)$-modules for each $i \geq 0$. Let $\chi^c(\Bp, M)$ ($\chi^c(\Bx, M)$) be the Euler characteristics of de Rham cohomology (resp. Koszul cohomology) of $M$. We  prove
$\chi^c(\Bp, M) = (-1)^{n+1}\chi^c(\Bx, M)$.
\end{abstract}

\maketitle
\section{Introduction}
Let $K$ be a field of characteristic zero and let $R
= K[X_0, X_1,\ldots,X_n]$. Let $A_{n + 1}(K)  = K<X_0, X_1,\ldots,X_n, \partial_0, \partial_1, \ldots,
\partial_n>$  be the $(n +1)^{th}$ Weyl algebra over $K$.
Let $N$ be a left $A_{n+1}(K)$ module. Now $\bP =
\partial_0, \partial_1,\ldots,\partial_n$ are pairwise commuting $K$-linear
maps. So we can consider the de Rham complex $K(\bP;N)$. Notice that
the de Rham cohomology modules  $H^*(\bP;N)$ are in general only
$K$-vector spaces. They are finite dimensional if $N$ is holonomic;
see \cite[Chapter 1, Theorem 6.1]{BJ}. In particular the Euler
characteristic $\chi^c(\Bp,N) = \sum_{i = 0}^{n+1} (-1)^i\dim_K H^i(\bP; N)$ is
a finite number.
In \cite{BJ} holonomic $A_{n+1}(K)$ modules are denoted as $\cB_{n +1}(K)$, the  \textit{Bernstein} class of left $A_{n+1}(K)$ modules.
 Similarly if $N$ is a holonomic $A_{n+1}(K)$-module then the Koszul cohomology modules $H^i(\Bx, N)$ are finite dimensional $K$-vector spaces. So again the Euler characteristic
$\chi^c(\Bx, N) = \sum_{i = 0}^{n+1} (-1)^i\dim_K H^i(\Bx; N)$ is
a finite number.  In general we do not know any relation between $\chi^c(\Bp, N)$ and $\chi^c(\Bx, N)$. The purpose of this paper is to describe a relation between the two Euler characteristics for a large class of holonomic modules.

\s \label{hypo} Now consider the case when $R$ is standard graded with $\deg X_i = 1$ for all $i$. Note the Weyl algebra $A_{n+1}(K)$ is also graded with $\deg \partial_i = -1$ for all $i$.
Let $I$ be a graded ideal in $R$. For $i \geq 0$
let $H^i_I(R)$ be the $i^{th}$-local cohomology module of $R$ with
respect to $I$. Let $M = H^i_I(R)$ for some $i$. By a result due
to Lyubeznik, see \cite{L}, $M$
is a finitely generated graded $A_n(K)$-module. In fact
$M$ is a \textit{holonomic} $A_{n+1}(K)$ module. We prove
\begin{theorem}\label{main}
(with hypotheses as in \ref{hypo}). We have
$$\chi^c(\Bp, M) = (-1)^{n+1}\chi^c(\Bx, M).$$
\end{theorem}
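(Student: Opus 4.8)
The plan is to realise both Euler characteristics as invariants of one complex carrying two anti-commuting differentials, and then to exploit that $M=H^i_I(R)$ is an \emph{Eulerian} graded $A_{n+1}(K)$-module.

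Set $\Lambda^{\bullet}=\bigwedge^{\bullet}K^{n+1}$ with standard basis vectors $e_0,\dots,e_n$, and let $C^{\bullet}=\Lambda^{\bullet}\otimes_K M$. On $C^{\bullet}$ there are two square-zero $K$-linear operators: the de Rham differential $d_\partial=\sum_i e_i\wedge\partial_i$, of exterior degree $+1$, with $H^{\bullet}(C^{\bullet},d_\partial)=H^{\bullet}(\Bp;M)$; and the contraction operator $d_X=\sum_i\iota_{e_i}X_i$, of exterior degree $-1$, for which $(C^{\bullet},d_X)=K_{\bullet}(\Bx;R)\otimes_R M$ is the homological Koszul complex, so that $H_k(C^{\bullet},d_X)=\operatorname{Tor}^R_k(R/\m,M)$ (because $X_0,\dots,X_n$ is a regular sequence with quotient $R/\m=K$; these groups are finite dimensional by holonomicity). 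The self-duality $\Lambda^k\cong\Lambda^{n+1-k}$ given by the top exterior form identifies the cohomological Koszul complex used in the paper with $(C^{\bullet},d_X)$ up to the shift $n+1$, whence $\chi^c(\Bx,M)=(-1)^{n+1}\sum_k(-1)^k\dim_K\operatorname{Tor}^R_k(R/\m,M)$. Thus Theorem~\ref{main} is equivalent to the single statement
\[
\chi^c(\Bp,M)=\sum_{k}(-1)^k\dim_K\operatorname{Tor}^R_k(R/\m,M),
\]
i.e.\ that $(C^{\bullet},d_\partial)$ and $(C^{\bullet},d_X)$ have the same Euler characteristic.

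The engine is the operator identity $d_\partial d_X+d_X d_\partial=N+\theta$ on $C^{\bullet}$, where $N$ is multiplication by $k$ on $\Lambda^k\otimes M$ and $\theta=\sum_i X_i\partial_i$ is the Euler operator; it follows from $[\partial_i,X_j]=\delta_{ij}$ and the standard anti-commutation of exterior multiplications and contractions. Now $M=H^i_I(R)$ is Eulerian: choosing homogeneous generators $f_1,\dots,f_r$ of the graded ideal $I$, the \v{C}ech complex presents $H^i_I(R)$ as a subquotient of $\bigoplus_{|S|=i}R_{f_S}$ with each $f_S=\prod_{t\in S}f_t$ homogeneous, each $R_{f_S}$ is Eulerian, and being Eulerian passes to finite direct sums and to subquotients. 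Hence $\theta$ acts on $M_j$ as multiplication by $j$. Decomposing $C^{\bullet}=\bigoplus_{s\in\Z}W_s$ by total internal degree, $W_s=\bigoplus_k\Lambda^k\otimes M_{s-k}$, both $d_\partial$ and $d_X$ preserve each $W_s$, and $d_\partial d_X+d_X d_\partial$ acts on $W_s$ as multiplication by $s$. For $s\neq0$ the map $s^{-1}d_X$ is a contracting homotopy for $(W_s,d_\partial)$ and $s^{-1}d_\partial$ one for $(W_s,d_X)$, so both are acyclic. Therefore $H^{\bullet}(\Bp;M)=H^{\bullet}(W_0,d_\partial)$ and $\operatorname{Tor}^R_{\bullet}(R/\m,M)=H_{\bullet}(W_0,d_X)$, and on $W_0=\bigoplus_{k=0}^{n+1}\Lambda^k\otimes M_{-k}$ the two differentials \emph{anti-commute} exactly.

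It then remains to prove a homological statement: on a $\Z$-graded vector space $W$ concentrated in degrees $0,\dots,n+1$, equipped with square-zero $d^{+}$ of degree $+1$ and $d^{-}$ of degree $-1$ with $d^{+}d^{-}+d^{-}d^{+}=0$, and with $H^{\bullet}(W,d^{+})$ and $H^{\bullet}(W,d^{-})$ both finite dimensional, one has $\sum_k(-1)^k\dim H^k(W,d^{+})=\sum_k(-1)^k\dim H^k(W,d^{-})$. Since $d^{+}$ and $d^{-}$ anti-commute, $d^{-}$ induces a square-zero map $\bar d^{-}$ on the finite-dimensional complex $H^{\bullet}(W,d^{+})$, and symmetrically; a spectral-sequence argument (which converges because the exterior grading is bounded) identifies a common value, giving
\[
\sum_k(-1)^k\dim H^k(W,d^{+})=\chi\!\bigl(H^{\bullet}(W,d^{+}),\bar d^{-}\bigr)=\chi\!\bigl(H^{\bullet}(W,d^{-}),\bar d^{+}\bigr)=\sum_k(-1)^k\dim H^k(W,d^{-}),
\]
using that the Euler characteristic of a bounded finite-dimensional complex equals that of its cohomology. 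Applying this to $(W_0,d_\partial,d_X)$ yields $\chi^c(\Bp,M)=\sum_k(-1)^k\dim_K\operatorname{Tor}^R_k(R/\m,M)$, hence the theorem. I expect this last step to be the real obstacle: $W_0$ is in general infinite dimensional — already for $M=H^1_{(X_0)}(R)$ the graded pieces $M_{-k}$ are infinite dimensional — so the two Euler characteristics cannot be compared termwise, and the convergence of the comparison must be argued with care, for instance by approximating $W_0$ from inside by finite-dimensional subcomplexes coming from a good filtration on $M$ that is compatible with the grading and stable under $d_X$ (the $X_i$ having order $0$), while using throughout that $H^{\bullet}(W_0,d_\partial)$ and $H_{\bullet}(W_0,d_X)$ are finite dimensional by holonomicity.
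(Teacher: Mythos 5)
Your reduction to the internal-degree-zero complex is correct and is a genuinely different route from the paper's: the Cartan-type identity $d_\partial d_X + d_X d_\partial = N + \theta$, together with the fact that $H^i_I(R)$ is Eulerian (which you could simply quote from \cite{MaZhang}), does show that both the de Rham cohomology and the Koszul homology of $M$ are concentrated in internal degree $0$, i.e.\ are computed by $(W_0,d_\partial)$ and $(W_0,d_X)$, and your sign bookkeeping identifying the theorem with $\chi^c(\Bp,M)=\sum_k(-1)^k\dim_K\operatorname{Tor}^R_k(K,M)$ is fine. But the heart of the theorem is exactly the step you leave open, and as written this is a genuine gap, not a technicality. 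The asserted ``spectral-sequence argument (which converges because the exterior grading is bounded)'' does not exist in the form described: since $d_\partial$ raises and $d_X$ lowers the exterior degree, neither truncation filtration $\bigoplus_{k\le p}\Lambda^k\otimes M_{-k}$ nor $\bigoplus_{k\ge p}\Lambda^k\otimes M_{-k}$ is stable under the total differential $d_\partial+d_X$, so boundedness of the grading by itself yields no convergent spectral sequence comparing $H^\bullet(W_0,d_\partial)$ with $H_\bullet(W_0,d_X)$ --- this is precisely the mixed-complex situation (one differential up, one down) where such comparisons fail without further input. The finite-dimensional shortcut (``Euler characteristic of a bounded complex equals that of its cohomology'') is unavailable because the terms of $W_0$ are infinite dimensional, as you note already for $M=H^1_{(X_0)}(R)$ with $n\ge 1$. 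Your proposed repair also does not work as stated: for a good (Bernstein) filtration $\Gamma_j M$ one has $X_i\Gamma_j\subseteq\Gamma_{j+1}$ and $\partial_i\Gamma_j\subseteq\Gamma_{j+1}$, so the finite-dimensional pieces of $W_0$ it cuts out are not subcomplexes for either differential, and no argument is offered that the two ``indices'' on $W_0$ coincide in this infinite-dimensional setting. So the crucial equality $\chi(W_0,d_\partial)=\chi(W_0,d_X)$ remains unproved, and it is not a routine convergence check.

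For comparison, the paper avoids this issue entirely: it proves additivity-friendly statements and inducts on $\dim\Supp M$, reducing modulo $\Gamma_\m(M)$, choosing a linear nonzerodivisor $X_0$, and using the exact sequence $0\to M\to M_{X_0}\to H^1_{X_0}(M)\to 0$ together with the key vanishing $\chi(\Bp,N)=0$ (and the easy $\chi(\Bx,N)=0$) for graded holonomic generalized Eulerian $N$ with $N=N_{X_0}$; that vanishing (Theorem \ref{main-lemm}, proved via the one-variable case and an induction on length) is where the real work lies, and it is exactly the ingredient for which your approach would need a rigorous substitute.
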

In fact we prove  Theorem \ref{main} for a more general class of graded holonomic modules known as generalized Eulerian holonomic modules, see \ref{gen-e} for its definition.
The main technical result we prove is
\begin{theorem}\label{main-lemm}
(with hypotheses as in \ref{hypo}). Let $N$ be a graded, holonomic, generalized Eulerian $A_{n+1}(K)$-module. If $N = N_{X_0}$ then
$$\chi^c(\Bp, N) = 0.$$
\end{theorem}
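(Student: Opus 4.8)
The plan is to split the de Rham complex into its internal-degree strands, recognise each strand as a mapping cone, and read off the Euler characteristic. Grade $K(\bP;N)=N\otimes_{K}\Lambda^{\bullet}V$, $V=\bigoplus_{i=0}^{n}K\,dX_{i}$, by $\deg dX_{i}=1$; since each $\partial_{i}$ lowers internal degree by one the differential is homogeneous, so $K(\bP;N)=\bigoplus_{d\in\Z}[K(\bP;N)]_{d}$ as complexes and $H^{j}([K(\bP;N)]_{d})=[H^{j}(\bP;N)]_{d}$. Write $\Lambda^{\bullet}V=\Lambda^{\bullet}W\oplus(dX_{0}\wedge\Lambda^{\bullet}W)$ with $W=\bigoplus_{i=1}^{n}K\,dX_{i}$. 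In the strand $L_{d}^{\bullet}:=[K(\bP;N)]_{d}$ the $dX_{0}$-part is a subcomplex and the quotient is $\mathcal{A}_{d}^{\bullet}:=[K(\partial_{1},\dots,\partial_{n};N)]_{d}$, the degree-$d$ strand of the partial de Rham complex. In cohomological degree $p$ the subcomplex equals $N_{d-p}\otimes dX_{0}\wedge\Lambda^{p-1}W$, and multiplication by $X_{0}$ followed by deletion of $dX_{0}$ carries it to $N_{d-p+1}\otimes\Lambda^{p-1}W=\mathcal{A}_{d}^{p-1}$. \emph{This is the only point at which the hypothesis $N=N_{X_{0}}$ is used}: it makes $X_{0}\colon N_{d-p}\to N_{d-p+1}$ bijective, hence identifies the subcomplex with $\mathcal{A}_{d}^{\bullet}[-1]$ (up to an irrelevant sign in the differential), and a direct check of the connecting homomorphism of the resulting short exact sequence shows it is multiplication by $X_{0}\partial_{0}$. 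Since $X_{0}\partial_{0}$ is homogeneous of degree $0$ and commutes with $\partial_{1},\dots,\partial_{n}$, it is an endomorphism of $\mathcal{A}_{d}^{\bullet}$, and
\[
L_{d}^{\bullet}\;\cong\;\operatorname{Cone}\!\bigl(\mathcal{A}_{d}^{\bullet}\xrightarrow{\;X_{0}\partial_{0}\;}\mathcal{A}_{d}^{\bullet}\bigr)\qquad\text{for every }d.
\]

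Next I compute the action of $X_{0}\partial_{0}$ on $H^{j}(\mathcal{A}_{d}^{\bullet})=[H^{j}(\partial_{1},\dots,\partial_{n};N)]_{d}$. Let $\mathcal{L}=d\iota+\iota d$ on $K(\partial_{1},\dots,\partial_{n};N)$, where $\iota$ is contraction against the partial Euler vector field $\sum_{i=1}^{n}X_{i}\partial_{X_{i}}$; it is null-homotopic, and the standard Cartan homotopy formula gives $\mathcal{L}=\sum_{i=1}^{n}X_{i}\partial_{i}+p$ on $N\otimes\Lambda^{p}W$. Writing $\sum_{i=1}^{n}X_{i}\partial_{i}=\mathcal{E}-X_{0}\partial_{0}$ with $\mathcal{E}=\sum_{i=0}^{n}X_{i}\partial_{i}$, and noting that $\mathcal{N}:=\mathcal{E}-D$ ($D$ the degree operator) commutes with the full $A_{n+1}(K)$-action, so that $\mathcal{N}$ is nilpotent of an index independent of the degree (because $N$ is finitely generated), we get $\mathcal{L}=d\cdot\mathrm{id}-X_{0}\partial_{0}+\mathcal{N}$ on the strand $[K(\partial_{1},\dots,\partial_{n};N)]_{d}$. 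As $\mathcal{L}$ vanishes on cohomology,
\[
X_{0}\partial_{0}=d\cdot\mathrm{id}+\overline{\mathcal{N}}\quad\text{on }H^{j}(\mathcal{A}_{d}^{\bullet}),\qquad\text{with }\overline{\mathcal{N}}\text{ nilpotent of bounded index.}
\]

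Now conclude. If $d\neq 0$ then $d\cdot\mathrm{id}+\overline{\mathcal{N}}$ is invertible on $H^{j}(\mathcal{A}_{d}^{\bullet})$ (finite Neumann series), so the long exact sequence of the cone $L_{d}^{\bullet}$ forces $H^{j}(L_{d}^{\bullet})=0$, i.e. $[H^{j}(\bP;N)]_{d}=0$; thus $\chi^{c}(\Bp,N)=\chi(L_{0}^{\bullet})$, and $L_{0}^{\bullet}$ has finite-dimensional cohomology because $N$ is holonomic. (This concentration in internal degree $0$ is of the kind recalled earlier and may instead be quoted.) For $d=0$ the cone long exact sequence gives, for each $j$,
\[
0\to\operatorname{Coker}\!\bigl(X_{0}\partial_{0}\mid H^{j-1}(\mathcal{A}_{0}^{\bullet})\bigr)\to H^{j}(L_{0}^{\bullet})\to\Ker\!\bigl(X_{0}\partial_{0}\mid H^{j}(\mathcal{A}_{0}^{\bullet})\bigr)\to 0 .
\]
Hence $\Ker(X_{0}\partial_{0}\mid H^{j}(\mathcal{A}_{0}^{\bullet}))$ is finite-dimensional; since $X_{0}\partial_{0}$ is nilpotent there (the case $d=0$ of the previous display) and a nilpotent endomorphism with finite-dimensional kernel acts on a finite-dimensional space, each $H^{j}(\mathcal{A}_{0}^{\bullet})$ is finite-dimensional. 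Then $\dim\operatorname{Coker}=\dim\Ker$ for the endomorphism $X_{0}\partial_{0}$, so, with $k_{j}:=\dim_{K}\Ker(X_{0}\partial_{0}\mid H^{j}(\mathcal{A}_{0}^{\bullet}))$, the sequence yields $\dim_{K}H^{j}(L_{0}^{\bullet})=k_{j-1}+k_{j}$ and therefore $\chi^{c}(\Bp,N)=\chi(L_{0}^{\bullet})=\sum_{j}(-1)^{j}(k_{j-1}+k_{j})=0$.

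The subtle points are the grading bookkeeping identifying $L_{d}^{\bullet}$ with the mapping cone of $X_{0}\partial_{0}$ on the degree-$d$ strand of the partial de Rham complex --- it is exactly the internal shift forced by $\deg dX_{0}=1$ that the invertibility of $X_{0}$ absorbs, which is where the hypothesis enters --- and the finiteness of $H^{j}(\mathcal{A}_{0}^{\bullet})$, which is not evident a priori (partial de Rham cohomology need not have finite-dimensional graded pieces) and is forced here by the nilpotency of $X_{0}\partial_{0}$. Everything else is the reusable Cartan-homotopy computation together with elementary linear algebra.
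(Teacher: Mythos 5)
Your argument is correct, and it takes a genuinely different route from the paper. The paper proves the theorem by reduction to the one-variable case: it computes $H_*(\partial, K[X]_X)$ explicitly, shows any holonomic generalized Eulerian $A_1(K)$-module $M$ with $M=M_X$ admits an embedding $R_X^s\hookrightarrow M$ with localized cokernel, runs an induction on length to get $\chi(\partial,M)=0$ for $n=0$, and then handles general $n$ via the short exact sequences linking $H_i(\Bp,M)$ to $H_*(\partial_0,H_*(\partial',M))$, quoting that the partial de Rham homologies are again holonomic generalized Eulerian (a result from the second reference of the first author). You instead decompose by internal degree, exhibit each strand as the cone of $X_0\partial_0$ on the corresponding strand of $K(\partial_1,\dots,\partial_n;N)$ --- this is where $N=N_{X_0}$ enters, and your identification of the connecting map as $X_0\partial_0$ checks out --- and then use the Cartan homotopy $d\iota+\iota d=\sum_{i\ge1}X_i\partial_i+p$ together with the generalized Eulerian hypothesis (Euler operator $=$ degree $+$ nilpotent, with \emph{bounded} nilpotency index via centrality of $\epsilon-D$ and finite generation, which you rightly need for the finite-dimensionality step) to see that $X_0\partial_0$ acts as $d\cdot\mathrm{id}+\overline{\mathcal{N}}$ on the strand cohomology; the telescoping $\sum_j(-1)^j(k_{j-1}+k_j)=0$ then finishes. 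Your approach avoids the length induction and the explicit $R_X$ computation, is essentially self-contained (holonomicity is used only for finite-dimensional de Rham cohomology and finite generation), and yields more: concentration of $H^*(\Bp,N)$ in internal degree $0$ and the formula $\dim H^j=k_{j-1}+k_j$. The paper's route is more elementary step by step but leans on the cited structural results for partial de Rham homology of generalized Eulerian modules; your homotopy computation in effect re-proves the special case of those results that is needed here. The only items to write out fully are the sign bookkeeping in the cone identification and connecting map, which you correctly flag as a direct check.
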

We note that if $N = N_{X_0}$ then it is easy to note that the Koszul cohomology modules $H^i(\Bx, N)$ vanish for all $i$, see \ref{chi-X}.

\begin{remark}
  Although we stated all the results for cohomology modules, we will work with de Rham homology and Koszul homology.
\end{remark}

\begin{remark}
  The main contribution of the paper was to guess the result. The proofs are not hard.
\end{remark}
We now describe in brief the contents of the paper. In section two we discuss a few preliminary results that we need.
In section three we prove Theorem \ref{main-lemm}. Finally in section four we prove Theorem \ref{main}.
\section{Preliminaries}
In this section we discuss few preliminary results that we need.

Let $K$ be a field of characteristic zero and  let $R=K[X_0, X_1, \ldots, X_n]$  with standard grading. Let $A_{n+1}(K)$ be the $(n + 1)^{th}$ Weyl algebra over $K$. Set $\deg \partial_i= -1$. So $A_{n+1}(K)$ is a graded ring with $R$ as a graded subring.
The Euler operator, denoted by $\epsilon$, is defined as
$$\epsilon := \sum_{i=0}^n X_i\partial_i. $$

Note that $\deg \epsilon =0$. Let $M$ be a graded $A_{n + 1}(K)$-module. If $m\in M$ is homogeneous element, set $|m|= \deg m$.
\begin{definition}
 Let $M$ be a graded $A_{n+1}(K)$-module. We say $M$ is an \textit{Eulerian} $A_{n + 1}(K)$-module if for any homogeneous $z$ in $M$
 we have
$$ \epsilon z= |z|\cdot z.$$
\end{definition}
Clearly $R$ is an Eulerain $A_{n + 1}(K)$-module. Also by \cite[5.3]{MaZhang} graded local cohomology modules $H^i_I(R)$ is Eulerian.
Eulerian modules do not behave well under extensions, see \cite[3.6(1)]{MaZhang}. To rectify this the first author introduced the following notion:
\begin{definition}\label{gen-e}
A graded $A_{n + 1}(K)$-module $M$ is said to be \textit{generalized Eulerian} if for any homogeneous element $z$ of $M$
there exists a positive integer $a$ (depending on $z$) such that
$$ (\epsilon- {|z|})^a \cdot z = 0.$$
\end{definition}
\s We will need the following two computations. Let $E(K)$ be the injective hull of $K$ as an $R$-module. Then note $E(K)$ is an $A_{n+1}(K)$-module. We have
\begin{align*}
  H_i(\Bx, E(K))=
 \begin{cases}
    K \quad \text{if} \ i=n+1.\\
  0 \quad \text{if} \ i\neq n+1.
 \end{cases}
 \end{align*}
Also
\begin{align*}
  H_i(\Bp, E(K))=
 \begin{cases}
    K \quad \text{if} \ i=0.\\
  0 \quad \text{if} \ i\neq 0.
 \end{cases}
 \end{align*}
 The second computation follows from \cite[2.3]{TJ}. For the first note that $X_n \colon E_R(K) \rt E_R(K)$ is surjective with kernel $E_S(K)$ where $S = K[X_0, X_1, \ldots, X_{n-1}]$. An easy induction deduces the result.

 We will need the following result which is certainly known. We give a proof for the convenience of the reader.
 \begin{proposition}
 \label{chi-X} Let $R= K[X_0, X_1, \ldots, X_n]$ and let $M$ be an $R$-module. If $M = M_{X_0}$ then the Koszul homology modules $H_i(\Bx, M) = 0$ for all $i$. In particular $\chi(\Bx, M) = 0$.
 \end{proposition}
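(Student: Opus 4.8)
The plan is to use the standard fact that Koszul homology can be computed iteratively: if $\Bx = X_0, X_1, \ldots, X_n$, then $H_i(\Bx, M)$ is the homology of the Koszul complex $K(\Bx; M) = K(X_0; M) \otimes_R K(X_1, \ldots, X_n; R)$, and there is a short exact sequence relating it to the Koszul homology on the shorter sequence $\Bx' = X_1, \ldots, X_n$ via the last variable $X_0$. Concretely I would invoke the long exact sequence
\begin{equation*}
\cdots \rt H_i(\Bx', M) \xrightarrow{X_0} H_i(\Bx', M) \rt H_i(\Bx, M) \rt H_{i-1}(\Bx', M) \xrightarrow{X_0} H_{i-1}(\Bx', M) \rt \cdots
\end{equation*}
so that each $H_i(\Bx, M)$ is built from the kernel and cokernel of multiplication by $X_0$ on the modules $H_j(\Bx', M)$.

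The key observation is the hypothesis $M = M_{X_0}$: multiplication by $X_0$ is already an isomorphism on $M$. Since each $H_j(\Bx', M)$ is a subquotient of a direct sum of copies of $M$ (the Koszul complex $K(\Bx'; M)$ has terms $\bigwedge^k R^n \otimes M$, on which $X_0$ acts only through $M$), and since localization at $X_0$ is exact, multiplication by $X_0$ is an isomorphism on every $H_j(\Bx', M)$ as well. Therefore both the kernel and the cokernel of $X_0$ acting on $H_j(\Bx', M)$ vanish for every $j$, and the long exact sequence above forces $H_i(\Bx, M) = 0$ for all $i$. The claim $\chi(\Bx, M) = 0$ is then immediate from the definition of the Euler characteristic as the alternating sum of the dimensions (or lengths) of the $H_i(\Bx, M)$.

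An equivalent and perhaps cleaner phrasing: $H_i(\Bx, M) \cong \operatorname{Tor}^R_i(R/(\Bx), M) = \operatorname{Tor}^R_i(K, M)$, and since $X_0$ annihilates $K$ but acts invertibly on $M$, it acts as both zero and an isomorphism on each $\operatorname{Tor}^R_i(K, M)$, which is therefore zero. I do not anticipate a genuine obstacle here; the only point requiring a word of care is making sure the iterated-Koszul long exact sequence is set up with the right module in the right slot, so that the connecting maps really are multiplication by $X_0$. Note also that $M$ is only assumed to be an $R$-module, so no grading or holonomicity is used — this is a purely commutative-algebra statement, which is why it is separated out as a preliminary.
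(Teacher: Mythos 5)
Your proposal is correct, and your ``cleaner phrasing'' at the end is essentially the paper's own proof: the paper simply observes that $X_0$ acts bijectively on each $H_i(\Bx,M)$ (because it does so on $M$, hence on the whole complex $K(\Bx;M)$) while $X_0 H_i(\Bx,M)=0$ by the standard fact that the ideal generated by the sequence annihilates Koszul homology (cited there as \cite[1.6.5]{BH}); your Tor reformulation $H_i(\Bx,M)\cong\operatorname{Tor}_i^R(K,M)$ is the same argument in different clothing. Your primary route via the long exact sequence
\[
\cdots \rt H_i(\Bx',M)\xrightarrow{X_0} H_i(\Bx',M)\rt H_i(\Bx,M)\rt H_{i-1}(\Bx',M)\xrightarrow{X_0}\cdots
\]
is a valid alternative and also standard; it trades the single external input ($\Bx$ annihilates $H_*(\Bx,M)$) for the iterated-Koszul exact sequence, and one could even avoid the appeal to exactness of localization there, since multiplication by $X_0$ is an isomorphism of the complex $K(\Bx';M)$ and therefore of its homology. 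Either way the conclusion $H_i(\Bx,M)=0$ for all $i$, and hence $\chi(\Bx,M)=0$, follows; there is no gap.
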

 \begin{proof}
   As $M = M_{X_0}$ it follows that multiplication by $X_0$ is bijective on $H_i(\Bx, M)$. But $X_0H_i(\Bx, M) = 0$, see \cite[1.6.5]{BH}. The result follows.
 \end{proof}
\section{Euler characteristics of \MakeLowercase{de} Rham homology}

\s \label{setup}
 Throughout this section $R$ and $M$ denote as in the following:
 Let $K$ be a field of characteristic zero and  let $R=K[X_0, X_1, \cdots, X_n]$  be  the polynomial ring in $(n+1)-$variables. Give standard grading to $R$. Let
 $A_{n+1}(K)=D(R,K)=K<X_0, \cdots,X_n, \partial_0, \cdots, \partial_n>$  be the $(n+1)^{th}$ Wey-algebra over $K$.  Consider $A_{n+1}(K)$ graded with $\deg(\partial_i)=-1,$ for $i=0, 1, \cdots, n.$ Let $M=\bigoplus_{n\in \mathbb{Z}}M_n$
 be a graded holonomic generalized Eulerian $A_{n+1}(K)-$module.

In this section we prove
\begin{theorem}\label{mthm}
 (with hypotheses as in \ref{setup}). If $M\cong M_{X_0}$ then $\chi(\Bp, M)=0.$
\end{theorem}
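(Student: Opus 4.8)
The plan is to integrate out the variables $X_1,\dots,X_n$ and reduce Theorem \ref{mthm} to a short computation inside the one–variable Weyl algebra $A_1(K)=K\langle X_0,\partial_0\rangle$. I work throughout with de Rham homology, and grade the de Rham complex by declaring $e_{i_1}\wedge\dots\wedge e_{i_p}\otimes m$ to have degree $|m|-p$, so that the de Rham differential is homogeneous of degree $0$. Since $\partial_0$ commutes with $\partial_1,\dots,\partial_n$, the complex $K(\bP;M)$ is, up to the usual shift, the mapping cone of $\partial_0$ acting on the partial de Rham complex $C_\bullet:=K(\partial_1,\dots,\partial_n;M)$, and $C_\bullet$ is a complex of graded $A_1(K)$-modules because $X_0$ and $\partial_0$ commute with $\partial_1,\dots,\partial_n$ and with the differential of $C_\bullet$. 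Put $N_i:=H_i(C_\bullet)=H_i(\partial_1,\dots,\partial_n;M)$. By the preservation of holonomicity under $D$-module direct images along an affine projection (provable directly via Bernstein filtrations; cf. \cite{BJ}), each $N_i$ is a graded holonomic $A_1(K)$-module; in particular $N_i$ is finitely generated over $A_1(K)$ and $H_*(\partial_0;N_i)$ is finite dimensional over $K$. The long exact sequence of the mapping cone — equivalently the spectral sequence of the double complex, all of whose entries are now finite dimensional — then gives
$$\chi(\Bp,M)=\sum_i(-1)^i\,\chi(\partial_0;N_i),$$
so it is enough to prove $\chi(\partial_0;N_i)=0$ for each $i$.

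Two properties of $N_i$ are needed. First, localization at $X_0$ commutes with the functor $K(\partial_1,\dots,\partial_n;-)$ (as $X_0$ commutes with $\partial_1,\dots,\partial_n$), so $(N_i)_{X_0}=H_i(\partial_1,\dots,\partial_n;M_{X_0})=N_i$, and hence $X_0$ acts bijectively on $N_i$. Second, $N_i$ is generalized Eulerian over $A_1(K)$ up to a shift of grading. To obtain this, set $\epsilon'=\sum_{j=1}^{n}X_j\partial_j$ and use the Koszul/co-Koszul homotopy identity $\delta_\partial\delta_X+\delta_X\delta_\partial=(\epsilon'+(n-p))\cdot\mathrm{id}$ on the degree-$p$ part of $C_\bullet$, where $\delta_\partial$ is the de Rham differential of $\partial_1,\dots,\partial_n$ and $\delta_X$ the dual contraction built from $X_1,\dots,X_n$; this identity shows that multiplication by $\epsilon'+(n-p)$ is null-homotopic on $C_\bullet$, hence induces $0$ on $N_i$. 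Combining this with the generalized Eulerian relations for $M$, applied to the finitely many homogeneous components of a cycle representing a given homogeneous class of $N_i$, one concludes: for homogeneous $w\in N_i$ of degree $f$ there is $a\ge 1$ with $(X_0\partial_0-(f+n))^a\,w=0$.

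The crux is now the following one-variable assertion: if $N$ is a graded holonomic $A_1(K)$-module on which $X_0$ acts bijectively and on which $X_0\partial_0$ acts, on each graded piece $N_f$, as $(f+c)\cdot\mathrm{id}$ plus a nilpotent (with $c$ a fixed constant), then $\chi(\partial_0;N)=0$. Indeed $\partial_0=X_0^{-1}(X_0\partial_0)$, so $\partial_0\colon N_f\to N_{f-1}$ is bijective for all $f\neq -c$; hence $\ker(\partial_0\colon N\to N)$ and $\operatorname{coker}(\partial_0\colon N\to N)$ are concentrated in the degrees coming from $f=-c$, and the bijection $X_0$ identifies them with the kernel and cokernel of $X_0\partial_0$ acting on $N_{-c}$. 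Since $N$ is finitely generated and graded over $A_1(K)$, the piece $N_{-c}$ is finitely generated over $(A_1(K))_0=K[X_0\partial_0]$; as $X_0\partial_0$ acts nilpotently on $N_{-c}$, this forces $\dim_K N_{-c}<\infty$, and a nilpotent endomorphism of a finite dimensional vector space has kernel and cokernel of equal dimension. Therefore $\chi(\partial_0;N)=0$. Applying this with $N=N_i$ and $c=n$ finishes the proof.

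The difficulty lies not in any of the computations above, all of which are short, but in the finiteness input: one has to know that the partial de Rham homology $N_i=H_i(\partial_1,\dots,\partial_n;M)$ is again holonomic, hence coherent, over $A_1(K)$. That is exactly preservation of holonomicity under the $D$-module direct image for $\mathbb{A}^{n+1}_K\to\mathbb{A}^1_K$, and it is what makes the Euler characteristics on the right-hand side finite and the spectral-sequence bookkeeping valid. A smaller point to handle carefully is that the generalized Eulerian property descends to $N_i$ with a uniform nilpotence exponent on each homology class — which is automatic, since each class is represented by a cycle with finitely many homogeneous components — and that the shift "$+n$" is the one dictated by the grading convention $\deg\partial_i=-1$.
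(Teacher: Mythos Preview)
Your reduction is exactly the one in the paper: set $N_i=H_i(\partial_1,\dots,\partial_n;M)$, observe that $X_0$ acts bijectively on each $N_i$, that $N_i$ is holonomic and (after a shift by $n$) generalized Eulerian over $A_1(K)$, and then use the two--term spectral sequence for $\partial_0$ to get $\chi(\Bp,M)=\sum_i(-1)^i\chi(\partial_0;N_i)$. The paper cites \cite[3.4]{TJG} for the holonomic/generalized Eulerian statement about $N_i$; your Cartan--homotopy identity $\delta_\partial\delta_X+\delta_X\delta_\partial=(\epsilon'+(n-p))\,\mathrm{id}$ is a clean self-contained substitute, and your deduction that $(X_0\partial_0-(f+n))^a$ kills a degree-$f$ class in $N_i$ is correct (note $\epsilon'$ does induce a well-defined operator on $N_i$ because $\delta_\partial\epsilon'-\epsilon'\delta_\partial=\delta_\partial$).

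Where you genuinely diverge from the paper is in the one--variable step. The paper first computes $\chi(\partial,R_X)=0$ by hand, then shows that any nonzero holonomic generalized Eulerian $A_1(K)$--module $M$ with $M=M_X$ admits an embedding $R_X^s\hookrightarrow M$ with $s=\dim_K H_1(\partial,M)\geq 1$, and concludes by induction on length. Your argument is more direct: from $\partial_0=X_0^{-1}(X_0\partial_0)$ and the fact that $X_0\partial_0$ acts on $N_f$ as $(f+n)+\text{nilpotent}$, you get that $\partial_0\colon N_f\to N_{f-1}$ is bijective for $f\neq -n$, so $H_1(\partial_0;N)$ and $H_0(\partial_0;N)$ are identified (via $X_0$) with the kernel and cokernel of the nilpotent operator $X_0\partial_0$ on the single graded piece $N_{-n}$; since $(A_1(K))_0=K[X_0\partial_0]$ and each $(A_1(K))_d$ is cyclic over it, $N_{-n}$ is finitely generated over $K[X_0\partial_0]$ and hence finite dimensional, and a nilpotent endomorphism of a finite dimensional space has equal kernel and cokernel. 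This avoids both the explicit $R_X$ computation and the length induction, at the cost of the small (but valid) verification that graded pieces of a finitely generated graded $A_1(K)$--module are finite over $K[X_0\partial_0]$. Both routes are correct; yours is shorter, the paper's yields a bit more structural information (the filtration by copies of $R_X$).
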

The idea to prove the result is to prove it first when $n = 0$ and then deduce the general result from $n = 0$ case.
\begin{lemma}\label{mlemma1}
 Let  $R=K[X], \ A_1(K)=K<X, \partial>,$ and $\deg (X) =1,$ $\deg(\partial )=-1.$ Then  $H_1(\partial, R_X) = K$ and $H_0(\partial, R_X) =K\frac{1}{X}.$ In particular $\chi(\partial, R_X) = 0.$

\end{lemma}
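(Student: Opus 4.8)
The plan is to compute the de Rham homology of $R_X = K[X,X^{-1}]$ directly, since here everything is explicit: the de Rham complex $K(\partial; R_X)$ is simply the two-term complex $0 \to R_X \xrightarrow{\partial} R_X \to 0$, where $\partial$ acts as the usual derivative $d/dX$ on Laurent polynomials. So $H_1(\partial, R_X) = \Ker(\partial \colon R_X \to R_X)$ and $H_0(\partial, R_X) = \operatorname{coker}(\partial \colon R_X \to R_X)$.

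First I would identify the kernel: if $f = \sum_{j} a_j X^j \in R_X$ satisfies $\partial f = \sum_j j a_j X^{j-1} = 0$, then $j a_j = 0$ for all $j$, and since $\operatorname{char} K = 0$ this forces $a_j = 0$ for all $j \neq 0$. Hence $\Ker \partial = K$ (the constants), giving $H_1(\partial, R_X) = K$. Next I would identify the cokernel: for $j \neq -1$ we have $X^j = \partial\bigl(\tfrac{1}{j+1}X^{j+1}\bigr)$, so every monomial $X^j$ with $j \neq -1$ lies in the image of $\partial$; on the other hand $X^{-1}$ is not in the image, since $\partial(\sum a_j X^j)$ has zero coefficient in degree $-1$ (the term $X^{-1}$ would have to come from $\partial(a_0 X^0) = 0$). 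Therefore the image of $\partial$ is the span of all $X^j$ with $j \neq -1$, and $H_0(\partial, R_X) = R_X/\image \partial \cong K \cdot \tfrac{1}{X}$, a one-dimensional $K$-vector space spanned by the class of $X^{-1}$.

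Combining, $\dim_K H_1(\partial, R_X) = \dim_K H_0(\partial, R_X) = 1$, so the Euler characteristic is $\chi(\partial, R_X) = \dim_K H_0 - \dim_K H_1 = 1 - 1 = 0$ (signs are irrelevant for the vanishing). This completes the argument.

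There is essentially no obstacle here: the case $n = 0$ is a bare-hands computation with Laurent polynomials, and the only input used is that $K$ has characteristic zero (so that division by nonzero integers is allowed, which is what makes both the kernel computation and the "antiderivative" computation work). The one point to state carefully is the grading bookkeeping — with $\deg X = 1$ and $\deg \partial = -1$, the map $\partial$ is homogeneous of degree $-1$, so both homology modules are graded, $H_1$ concentrated in degree $0$ and $H_0$ concentrated in degree $-1$ — but this is cosmetic and not needed for the stated conclusion.
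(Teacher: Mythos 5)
Your proof is correct and follows essentially the same route as the paper: a direct computation of the kernel and cokernel of $\partial = d/dX$ on Laurent polynomials, using characteristic zero to integrate every monomial except $X^{-1}$ and to see that only constants are killed by $\partial$. Your monomial-by-monomial bookkeeping is if anything slightly cleaner than the paper's contradiction argument for the kernel, but the content is identical.
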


\begin{proof}
 Let $u \in  H_1(\partial, R_X).$ Suppose $u=a/X^n$ with $a\in R, \ n\geq1$ and $X \not \backslash a.$

 As
 \[0=\partial u = \frac{-1}{n}\frac{a}{X^{n+1}} + \frac{1}{X^n}\partial(a).\]
 So $nX\partial(a) = a. $ Thus $X/ a.$ This is a  contradiction.

 So $u=p(x)$ for $p(x)\in R.$ $\partial(u) =0 \Rightarrow p(x)=c$ (a constant). Therefore $H_1(\partial, R_X) =K.$

 Now we compute $H_0(\partial, R_X).$ Notice that if $p(x) \in R$ then $q =\int p(x) \in R$ and $\partial(q) =p(x).$ Also for $n\geq 2$
 \[\partial \left(\frac{1}{X^{n-1}} \right) = \frac{-(n-1)}{X^n}. \ \text{So} \ \frac{1}{X^n} = \partial \left(\frac{-1}{(n-1)X^n} \right).\]
Thus $H_0(\partial, R_X) \subseteq K \frac{1}{X}.$

Also $\frac{1}{X} \not = \partial(r)$ for any $r\in R_X$. To see this note if
\begin{align*}
 r &= Xp(x)+c_0+\frac{a_1}{X}+\frac{a_2}{X^2}+\cdots + \frac{a_s}{X^s}, \ a_i\in K\\
\text{then} \  \partial(r)& = \partial(Xp(x)) + 0 - \frac{a_1}{X^2}-\frac{\partial a_2}{X^3}- \cdots - \frac{ra_r}{X^{r+1}}\\
 & \not = \frac{1}{X}.
\end{align*}
Therefore $H_0(\partial, R_X) = K\frac{1}{X}.$ Hence $\chi(\partial, R_X) =0.$
\end{proof}

\begin{lemma}\label{mlemma2}
 Let  $R=K[X], \ A_1(K)=K<X, \partial>,$ and $\deg (X) =1,$ $\deg(\partial )=-1$ and $M$ be a  holonomic, generalized Eulerian $A_1(K)-$module. Assume $M=M_X.$ Then
 \begin{enumerate}[\rm (1)]
   \item $H_1(\partial, M) \neq 0.$
   \item There exists a sequence of holonomic generalized Eulerian $A_1(K)$-modules
   $$ 0 \rt R_X^s \rt M \rt U \rt 0,$$
   where $s = \dim H_1(\partial, M)$ and $U = U_X$.
 \end{enumerate}
\end{lemma}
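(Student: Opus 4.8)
The plan is to split into the two parts, using Lemma \ref{mlemma1} as the base computation.

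*Part (1).* The strategy is to produce, from the hypothesis $M = M_X$, a nonzero copy of $R_X$ inside $M$ (up to shift) compatibly with the $A_1(K)$-structure, and then conclude $H_1(\partial, M) \neq 0$ from $H_1(\partial, R_X) = K$ together with left-exactness of de Rham homology in top degree. Concretely: pick any nonzero homogeneous $z \in M$ with $(\epsilon - |z|)^a z = 0$ minimal. One expects that a suitable element of the form $(\epsilon - |z|)^{a-1}z$ (or an iterate) is genuinely Eulerian, i.e. annihilated by $\epsilon - c$ for its degree $c$. Sending $1/X^c \mapsto$ (that element) should define an $A_1(K)$-linear map $R_X(c) \to M$; it is nonzero, hence injective because $R_X$ is a simple $A_1(K)$-module away from... — more carefully, $R_X$ has $R_X \supset R$ as its only proper nonzero $A_1$-submodule is false, so I would instead argue directly that the kernel is a graded $A_1$-submodule of $R_X$ on which $X$ acts bijectively, and the only such is $0$ or $R_X$; since the map is nonzero it is injective. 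Then $H_1(\partial, -)$ applied to $0 \to R_X(c) \to M \to C \to 0$ gives an injection $H_1(\partial, R_X(c)) \hookrightarrow H_1(\partial, M)$ (de Rham homology being the homology of a Koszul-type complex, $H_{top}$ is always left-exact), and $H_1(\partial, R_X(c)) = K \neq 0$ by Lemma \ref{mlemma1}. The main obstacle here is the Eulerian-straightening step: showing that from a generalized Eulerian element one can extract an honestly Eulerian one generating a copy of $R_X$ — this likely needs the characteristic-zero hypothesis (to invert the relevant integers appearing when $\partial$ acts) and a careful induction on the nilpotency index $a$.

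*Part (2).* Here I would iterate Part (1). Set $s = \dim_K H_1(\partial, M)$, which is finite since $M$ is holonomic. By the argument above, each basis vector of $H_1(\partial, M)$ is detected by an embedding $R_X(c_i) \hookrightarrow M$; I want to assemble these into a single embedding $\bigoplus_{i=1}^s R_X(c_i) \hookrightarrow M$ whose image $P$ is a generalized Eulerian $A_1(K)$-submodule (submodules of generalized Eulerian modules are generalized Eulerian, and direct sums of such are such). The key point is that the induced map $H_1(\partial, P) \to H_1(\partial, M)$ should be an isomorphism: it is injective by the top-degree left-exactness, and it is surjective by construction (we chose the summands to span). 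Writing $U = M/P$, the long exact sequence in de Rham homology then forces $H_1(\partial, U) = 0$; but for an $A_1(K)$-module $U$ one has $H_1(\partial, U) = \{u \in U : \partial u = 0\}$ (the cycles, since the complex is just $U \xrightarrow{\partial} U$ in degrees $1,0$), so $\partial$ is injective on $U$. Finally I must check $U = U_X$: localization is exact, and $M = M_X$, $P = P_X$ (each $R_X(c_i)$ is $X$-local), so $U = M_X/P_X = (M/P)_X = U_X$. Holonomicity and the generalized Eulerian property of $U$ are inherited from $M$ (quotients of generalized Eulerian modules are generalized Eulerian; holonomic modules are closed under quotients).

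The step I expect to be genuinely delicate is the passage from "generalized Eulerian" to an actual copy of $R_X$ in Part (1), and relatedly, in Part (2), arranging that the chosen $R_X(c_i)$'s are simultaneously independent inside $M$ and that their images account for all of $H_1(\partial, M)$ — a dimension count plus the injectivity of $H_1(\partial, P) \to H_1(\partial, M)$ should close this, but one must be careful that no cancellation occurs in $H_1(\partial, \bigoplus R_X(c_i)) = \bigoplus H_1(\partial, R_X(c_i))$ when mapped to $M$, which again rests on the top-homology left-exactness applied to $P \hookrightarrow M$.
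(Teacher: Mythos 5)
Your part (1) is workable, but you are worried about the wrong step. The ``Eulerian-straightening'' is immediate: if $(\epsilon-|z|)^a z=0$ with $a$ minimal, then $w=(\epsilon-|z|)^{a-1}z$ is nonzero and satisfies $(\epsilon-|z|)w=0$; since $\deg \epsilon =0$ we have $|w|=|z|$, so $w$ is honestly Eulerian, and a direct computation (using that $X$ is invertible on $M$) shows $wX^{-k}$ is Eulerian of degree $|w|-k$ for every $k$, so the Eulerian element can be placed in any degree. In fact, once you have an Eulerian element $w$ of degree $0$ you are done with (1) without any $R_X$-embedding: $X\partial w=\epsilon w=0$ and $X$ is injective on $M=M_X$, so $\partial w=0$ and $0\neq w\in H_1(\partial,M)$. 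This is exactly the paper's argument (phrased there as a contradiction). Your longer route through an embedded copy of $R_X$ also works, but the well-definedness of $1/X^c\mapsto w$ quietly uses the identification $\ann_{A_1}(1/X^c)=A_1(\epsilon+c)$, which is a small extra verification you should record.

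The genuine gap is in part (2), at the point you yourself flag: the injectivity of $\Phi\colon \bigoplus_{i=1}^{s}R_X(c_i)\to M$. Your proposed fix --- left-exactness of $H_1(\partial,-)$ applied to $P\hookrightarrow M$ --- addresses the wrong map: it controls $H_1(\partial,P)\to H_1(\partial,M)$ but says nothing about whether $\bigoplus R_X(c_i)\to P$ collapses. The correct way to close this with your own tools is to apply part (1) to $N=\Ker \Phi$: since $X$ is injective on $M$ one gets $N=N_X$, and $H_1(\partial,N)=N\cap H_1(\partial,\bigoplus R_X(c_i))=N\cap \operatorname{span}_K(1_1,\dots,1_s)=0$ because the elements $1_i$ map to the linearly independent $\alpha_i$; hence $N=0$ by (1). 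Separately, your side remark that the long exact sequence ``forces $H_1(\partial,U)=0$'' is false --- the connecting map $H_1(\partial,U)\to H_0(\partial,P)$ need not vanish, and indeed $H_1(\partial,U)=0$ would contradict part (1) whenever $U\neq 0$; fortunately the lemma does not claim this. The paper avoids the whole independence issue by a cleaner decomposition: it maps the free module $R^s\to M$, $e_i\mapsto \alpha_i$ (this is $A_1(K)$-linear because $\partial\alpha_i=0$), proves injectivity by applying $\partial^{t}$ with $t=\max\deg r_i$ to a putative relation $\sum r_i\alpha_i=0$, which in characteristic zero produces a nontrivial $K$-linear relation among the $\alpha_i$, and only then localizes at $X$ to obtain $0\to R_X^s\to M\to U\to 0$.
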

\begin{proof} (1) Let $\epsilon = X\partial.$ As $M$ is generalized Eulerian it follows that if $m \in M$ is homogeneous then
 \[(\epsilon -\deg m)^rm=0,  \  \  \text{for some} \ r\geq 1 \ \text{(depending on $m$)}.\]
Let $m\in M,$ $m\not =0$ and $\deg(m) =0.$ Assume $\epsilon^rm=0$ and $\epsilon^{r-1}m \not =0.$

Suppose if possible $H_1(\partial, M)=0.$ Then
\[\partial \epsilon^{r-1}m \not =0, \ \text{and} \ \partial \epsilon^{r-1}m\in M_{-1} = \frac{M_0}{X}. \]
Thus $\partial \epsilon^{r-1}m = \alpha/X.$ So $X\partial \epsilon^{r-1}m =\alpha.$ Thus $\alpha =  \epsilon^{r}m =0.$
This is a contradiction.  So $H_1(\partial, M) \neq 0$.

(2) Let $s = \dim_K H_1(\partial, M)$. By (1) we have $s \neq 0$. Assume $H_1(\partial, M) = K\alpha_1+ K\alpha_2+\cdots + K\alpha_s.$ Note that $\deg(\alpha_i) =-1$ see \cite[3.5]{TJG}. We have $H_1(\partial, M) \subset M(+1).$ So $\alpha_i$ can be considered as an element of $M_0.$

Now consider $\psi : R^s\rightarrow M$ where $\psi(e_i) = \alpha_i.$ Clearly $\psi $ is $R$ linear. As
\[\partial(r\alpha_i) = \partial(r) \alpha_i+r\partial(\alpha_i) = \partial(r)\alpha_i,\]
 we get that $\psi$ is $A_1(K)$-linear.   Thus $\psi $ is map of generalized Eulerian\\ $A_1(K)-$modules.

 Claim: $\psi$ is injective. \\
 Suppose if possible there exists $v = r_1e_1+ \cdots + r_s e_s \in \ker \psi$ and $v \neq 0$. Then we have
 $r_1\alpha_1 + \cdots + r_s\alpha_s = 0$. Assume without any loss of generality that $t = \deg r_1 = \max\{ \deg r_i \mid 1 \leq i \leq s \}$.
 Then we have
 $$ \partial^{t}(r_1)\alpha_1 + \cdots + \partial^{t}(r_s)\alpha_s = 0.$$
 We note that $\partial^t(r_i) \in K$ and $\partial^{t}(r_1) \neq 0$. It follows that $\alpha_1, \ldots, \alpha_s$ is NOT linearly independent over $K$. This is a contradiction.
 So $\ker \psi = 0$.

 As $\psi$ is injective we have a short exact sequence of generalized Eulerian holonomic modules
\[0\rightarrow R^s\xrightarrow {\bar{\psi}}M\rightarrow V\rightarrow 0.\]
Localising we get
\[0\rightarrow R^s_X\rightarrow M_X=M \rightarrow V_X\rightarrow 0.\]
Set $U = V_X$. The result follows.
\end{proof}

\begin{theorem}\label{mthm1}
Let  $R=K[X], \ A_1(K)=K<X, \partial>,$ and $\deg (X) =1,$ $\deg(\partial )=-1$ and $M$ be a  holonomic, generalized Eulerian $A_1(K)-$module.
If $M\cong M_X$ then $\chi(\partial, M) = 0.$
\end{theorem}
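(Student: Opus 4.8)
The plan is to argue by induction on $\operatorname{length}_{A_1(K)} M$, which is finite because $M$ is holonomic. If $M = 0$ there is nothing to prove, so one assumes $M \neq 0$; since $M = M_X$, multiplication by $X$ is bijective on $M$, and multiplying a nonzero homogeneous element by a suitable power of $X^{-1}$ produces a nonzero element of $M_0$, so Lemma \ref{mlemma2} applies.

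First I would record that the de Rham Euler characteristic is additive on short exact sequences. For a short exact sequence $0 \to A \to B \to C \to 0$ of $A_1(K)$-modules, applying the two-term de Rham complex $K(\partial;-)$ termwise gives a short exact sequence of complexes, hence a six-term long exact homology sequence
$$0 \to H_1(\partial, A) \to H_1(\partial, B) \to H_1(\partial, C) \to H_0(\partial, A) \to H_0(\partial, B) \to H_0(\partial, C) \to 0.$$
When $A, B, C$ are holonomic every term is a finite-dimensional $K$-vector space, so the alternating sum of dimensions vanishes, which rearranges to $\chi(\partial, B) = \chi(\partial, A) + \chi(\partial, C)$; also $\chi(\partial, A^s) = s\,\chi(\partial, A)$ since de Rham homology commutes with finite direct sums.

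Next I would invoke Lemma \ref{mlemma2}(2) to get a short exact sequence $0 \to R_X^{\,s} \to M \to U \to 0$ of holonomic generalized Eulerian $A_1(K)$-modules with $s = \dim_K H_1(\partial, M) \geq 1$ (the inequality is Lemma \ref{mlemma2}(1)) and $U = U_X$. Additivity together with Lemma \ref{mlemma1}, which gives $\chi(\partial, R_X) = 0$, then yields
$$\chi(\partial, M) = s\,\chi(\partial, R_X) + \chi(\partial, U) = \chi(\partial, U).$$
Since $\operatorname{length} R_X \geq 1$ and $s \geq 1$, one has $\operatorname{length} U = \operatorname{length} M - s\cdot\operatorname{length} R_X < \operatorname{length} M$; because $U$ is holonomic, generalized Eulerian and equal to $U_X$, the induction hypothesis applies and gives $\chi(\partial, U) = 0$, hence $\chi(\partial, M) = 0$.

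The main point to be careful about is that the induction actually terminates, and this is precisely where the hypotheses enter: Lemma \ref{mlemma2}(1) forces $s \geq 1$ (this is the step using that $M$ is generalized Eulerian and $M = M_X$), and since $R_X \neq 0$ the quotient $U$ is then strictly shorter than $M$. The remaining ingredients — that subquotients of holonomic modules are holonomic, that holonomic $A_1(K)$-modules have finite length, and that $H_i(\partial,-)$ is finite-dimensional on holonomics — are standard, and for the modules appearing here they are already contained in Lemma \ref{mlemma2}.
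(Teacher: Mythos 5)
Your proposal is correct and is essentially the paper's own argument: the paper phrases the descent as a minimal-counterexample contradiction on length, while you phrase it as induction on length, but both rest on the same short exact sequence $0 \to R_X^s \to M \to U \to 0$ from Lemma \ref{mlemma2}, the computation $\chi(\partial, R_X)=0$ from Lemma \ref{mlemma1}, and additivity of $\chi$. Your explicit justification that a nonzero degree-zero element exists (needed to run Lemma \ref{mlemma2}(1)) is a small detail the paper leaves implicit, but it does not change the approach.
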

\begin{proof}
 Suppose if possible there is $M$ such that $M\cong M_X$ and $\chi(\partial, M)\neq0.$ Choose such an $M$ with least length. By \ref{mlemma2}, $H_1(\partial, M) \neq 0.$ Also we have an exact sequence of generalized Eulerian modules
 \[0\rightarrow R_X^s \rightarrow M\rightarrow U\rightarrow 0,\]
 with $s = \dim_K H_1(\partial, M) \geq 1$ and $U = U_X$. Note that
 \begin{align*}
  \chi(\partial,M) = & \chi(\partial, R_X^s)+ \chi(\partial, U)\\
   =& 0+\chi(\partial, U).\\
   \text{So} \ \chi(\partial, U) &\neq0\\
   \text{But} \ \lambda(U) =& \lambda(M) -s\lambda(R_X)\\
   < & \lambda(M).
 \end{align*}
This is a  contradiction. Therefore $\chi(\partial, M)=0.$
\end{proof}

As a corollary we have the following result.
\begin{corollary}\label{cor-main}
  Let  $R=K[X], \ A_1(K)=K<X, \partial>,$ and $\deg (X) =1,$ $\deg(\partial )=-1$ and $M$ be a  holonomic $A_1(K)-$module. Assume $M=M_X$. If $M(a)$ is generalized Eulerian for some $a \in \Z$  then $\chi(\partial, M)=0.$
\end{corollary}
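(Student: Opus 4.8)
The plan is to reduce at once to Theorem \ref{mthm1} via the observation that the de Rham Euler characteristic $\chi(\partial, -)$ does not change under a shift of the grading. First I would recall that for $n = 0$ the de Rham complex $K(\partial; N)$ of a graded $A_1(K)$-module $N$ is, regarded merely as a complex of $K$-vector spaces, nothing but the two-term complex $0 \to N \xrightarrow{\partial} N \to 0$ sitting in homological degrees $1$ and $0$, where the differential is the underlying $K$-linear action of $\partial$; the internal grading of $N$ only records the degrees of the homogeneous pieces of the homology and plays no role in the dimensions. Hence replacing $N$ by its shift $N(a)$ leaves this complex of $K$-vector spaces untouched, so $\dim_K H_i(\partial, N(a)) = \dim_K H_i(\partial, N)$ for all $i$, and therefore $\chi(\partial, N(a)) = \chi(\partial, N)$.

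Next I would transfer the hypotheses to $N := M(a)$, where $a$ is as in the statement. By assumption $N$ is generalized Eulerian. A grading shift does not alter the underlying $A_1(K)$-module, so $N$ is again holonomic. Finally, localization commutes with the shift, so $N_X = (M_X)(a) \cong M(a) = N$ since $M \cong M_X$; thus $N \cong N_X$. Now Theorem \ref{mthm1} applies to $N$ and gives $\chi(\partial, N) = 0$. Combining this with the shift-invariance established in the first step (applied to $N$ with the shift by $-a$, so that $N(-a) \cong M$) yields $\chi(\partial, M) = \chi(\partial, N) = 0$.

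This argument has essentially no real obstacle; the only point deserving care is the first step — making precise that the de Rham homology, viewed merely as a graded $K$-vector space, depends only on the underlying module together with the operators $\partial_i$ and not on the particular internal grading chosen, so that all relevant dimensions, and hence $\chi(\partial, -)$, are stable under grading shifts. Everything else is a routine transfer of the three hypotheses (holonomic, generalized Eulerian, $M \cong M_X$) along the identification of $M$ with its shift $M(a)$.
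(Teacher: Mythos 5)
Your proposal is correct and follows the same route as the paper: apply Theorem \ref{mthm1} to $M(a)$, using that $M(a)_X = M_X(a) = M(a)$, and then note $\chi(\partial, M(a)) = \chi(\partial, M)$ since a grading shift does not change the underlying complex of $K$-vector spaces. Your extra care in justifying the shift-invariance of $\chi(\partial,-)$ is a welcome elaboration of a step the paper states without comment.
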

\begin{proof}
 As $M(a) = M_X(a)=M(a)_X.$ So $\chi(\partial, M(a)) =0.$ But $\chi(\partial, M(a)) = \chi(\partial, M).$ Thus $\chi(\partial, M)=0.$
\end{proof}

We now give
\begin{proof}[Proof of Theorem \ref{mthm}]
The case when $n = 0$ is already done. So assume $n \geq 1$.
 Note that $M(-1)\xrightarrow {X_0} M$ is an isomorphism and $\partial_1, \cdots \partial_n$ commutes with $X_0.$ So
 \[H_i(\partial^{'} ,M)(-1)\xrightarrow {X_0} H_i(\partial^{'} ,M),\]
 is an isomorphism, where $\partial^{'} = \partial_1,  \cdots \partial_n.$ So $H_i(\partial^{'} , M) =H_i(\partial^{'},M)_{X_0}$ for all $i$.  Also $H_i(\partial^{'} , M)(-n)$ is holonomic, generalized Eulerian $K<X_0, \partial_0>$ module, see \cite[3.4]{TJG}. \\
 So $\chi(\partial_0, H_i(\partial^{'} , M)) =0.$ Thus
 \[\dim_K H_1(\partial_0, H_i(\partial^{'} , M)) = \dim_K H_0(\partial_0, H_i(\partial^{'} , M)).\]
 From the exact sequence (for all $i$)
 \[0\mapsto H_0(\partial_0, H_i(\partial^{'} , M))\mapsto H_i(\partial, M)\mapsto H_1(\partial_0, H_{i-1}(\partial^{'}, M))\mapsto 0,\]
 we get $\chi(\Bp, M)=0.$
\end{proof}

\section{Proof of Theorem \ref{main}}
In this section we give a proof of Theorem \ref{main}.  In fact we prove the result for graded, holonomic and generalized Eulerian modules.

\begin{theorem}
 Let $M$ be a graded holonomic generalized Eulerian $A_{n+1}(K)$ module. Then  $\chi(\Bp, M) = (-1)^{n+1}\chi(\Bx, M).$
\end{theorem}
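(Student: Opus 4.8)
The plan is to reduce the statement to Theorem \ref{mthm} by a localization argument, exactly parallelling the structure already used in Lemma \ref{mlemma2} and Theorem \ref{mthm1}. First I would consider the natural map $M \to M_{X_0}$ and let $L = \Gamma_{(X_0)}(M)$ be its kernel (the $X_0$-torsion submodule) and $C$ its cokernel. Since $M$ is holonomic and generalized Eulerian, both $L$ and $C$ inherit these properties (generalized Eulerianness is closed under submodules and quotients, and holonomicity under subquotients); moreover $C$ embeds in $M_{X_0}$, so $C = C_{X_0}$. By Theorem \ref{mthm} we get $\chi(\Bp, C) = 0$, and by Proposition \ref{chi-X} we get $\chi(\Bx, C) = 0$. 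Hence from the two long exact sequences in de Rham homology and Koszul homology attached to $0 \to L \to M \to C \to 0$, both Euler characteristics are additive, so it suffices to prove the theorem for $L$; that is, we may assume $M$ is $X_0$-torsion.

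Next I would iterate: having reduced to $X_0$-torsion modules, I would play the same game with $X_1$, then $X_2$, and so on. At each stage, replacing $X_0$ by $X_j$ in the kernel/cokernel argument, the cokernel becomes invertible in $X_j$ and therefore has vanishing de Rham and Koszul Euler characteristics (Theorem \ref{mthm} applies after noting that $M_{X_j}$ is again generalized Eulerian, and Proposition \ref{chi-X} applies after noting $M_{X_j}$ is a module over $K[X_0,\ldots,X_n]$ with $M_{X_j} = (M_{X_j})_{X_j}$). After $n+1$ steps we are reduced to the case where $M$ is $\m$-torsion, i.e. supported only at the maximal ideal $(X_0,\ldots,X_n)$. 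A finitely generated graded $A_{n+1}(K)$-module that is $\m$-torsion as an $R$-module is a finite direct sum of shifted copies $E(K)(a)$ of the injective hull $E_R(K)$; the generalized Eulerian hypothesis forces compatibility of the grading but does not change that it is a finite sum of shifts of $E(K)$. For such a module the two displayed computations in the excerpt give $H_i(\Bx, E(K)) = K$ concentrated in degree $i = n+1$ and $H_i(\Bp, E(K)) = K$ concentrated in degree $i = 0$, so $\chi(\Bx, E(K)) = (-1)^{n+1}$ and $\chi(\Bp, E(K)) = 1$; since both Euler characteristics are invariant under grading shift, we get $\chi(\Bp, M) = (-1)^{n+1}\chi(\Bx, M)$ in the base case.

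The step I expect to be the main obstacle is the structural claim that an $\m$-torsion holonomic generalized Eulerian $A_{n+1}(K)$-module is (up to grading shifts) a finite direct sum of copies of $E_R(K)$. What is true and classical is that any artinian $R$-module supported at $\m$ is a finite direct sum of copies of $E_R(K)$ as an $R$-module; the content here is that holonomicity guarantees finite length as an $A_{n+1}(K)$-module, hence in particular $R$-artinian, and then one must check that the $A_{n+1}(K)$-structure does not obstruct the decomposition — equivalently, that $E_R(K)$ is (up to shift) the unique simple $\m$-torsion holonomic $A_{n+1}(K)$-module and that extensions among such split in the relevant sense. Alternatively, and perhaps more cleanly, one avoids the structure theorem entirely: run the same kernel/cokernel induction one variable at a time, and observe that at the final stage the de Rham and Koszul Euler characteristics of an $\m$-torsion module can be computed directly by induction on its length using the long exact sequences, with $E_R(K)$ (and its shifts) as the only building blocks needed — so the only genuinely new input beyond Theorem \ref{mthm}, Proposition \ref{chi-X}, and the two displayed computations is bookkeeping with long exact sequences and the observation that Euler characteristics are shift-invariant and additive on short exact sequences.
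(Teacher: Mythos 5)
Your overall strategy (kill both Euler characteristics on $X_0$-invertible modules, reduce to torsion modules one variable at a time, then compute on $\m$-torsion modules via $E(K)$) is workable, but the first step as written contains a genuine error. The cokernel $C$ of the natural map $M \to M_{X_0}$ is $H^1_{X_0}(M)$, which is $X_0$-\emph{torsion}, not $X_0$-invertible: the class of $m/X_0^k$ is killed by $X_0^k$. It does not embed in $M_{X_0}$ (it is a quotient of it), and $C = C_{X_0}$ would force $C=0$. Concretely, for $n=0$ and $M = R = K[X]$ one has $C = R_X/R$, and Lemma \ref{mlemma1} gives $\chi(\partial, C) = \chi(\partial,R_X)-\chi(\partial,R) = -\chi(\partial,R) \neq 0$, so the claimed vanishing $\chi(\Bp,C)=0$ is false. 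Relatedly, your sequence $0 \to L \to M \to C \to 0$ is not exact at $C$. The correct statement is the four-term exact sequence $0 \to \Gamma_{(X_0)}(M) \to M \to M_{X_0} \to H^1_{X_0}(M) \to 0$; splitting it into two short exact sequences and using $\chi(\Bp, M_{X_0}) = \chi(\Bx, M_{X_0}) = 0$ yields $\chi(M) = \chi(\Gamma_{(X_0)}(M)) - \chi(H^1_{X_0}(M))$ for both theories. So the reduction to $X_0$-torsion modules survives, but you must carry \emph{two} torsion subquotients (both are holonomic and generalized Eulerian, being a submodule and a quotient of a localization), not one.

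With that repair, your variable-by-variable descent to $\m$-torsion modules followed by d\'evissage over shifts of $E(K)$ does give a proof, and it differs from the paper's argument: the paper first strips off $\Gamma_\m(M) \cong E(K)(n+1)^s$, then uses prime avoidance and a homogeneous linear change of coordinates to make $X_0$ a nonzerodivisor on $M$, so that $0 \to M \to M_{X_0} \to H^1_{X_0}(M) \to 0$ is honestly short exact, and inducts on $\dim \Supp M$ (the support of $H^1_{X_0}(M)$ drops in dimension). Your route avoids the change of coordinates and the induction on dimension at the cost of tracking more torsion pieces, and both routes ultimately rest on the same inputs: Theorem \ref{mthm}, Proposition \ref{chi-X}, the two $E(K)$ computations, and the identification of the $\m$-torsion building blocks with shifts of $E(K)$ (which you rightly flag as the structural point needing justification; your length-induction alternative, using shift-invariance and additivity of both Euler characteristics, is a legitimate way to soften it).
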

\begin{proof}
 We prove the result by induction on $\dim \Supp(M).$

 Suppose $\dim \Supp(M) = 0.$ Then $M = E_R(K)(n + 1)^s.$ Notice
 \begin{align*}
  H_i(\Bx, E(K))=
 \begin{cases}
    K \quad \text{if} \ i=n+1.\\
  0 \quad \text{if} \ i\neq n+1.
 \end{cases}
 \end{align*}
Also
\begin{align*}
  H_i(\Bp, E(K))=
 \begin{cases}
    K \quad \text{if} \ i=0.\\
  0 \quad \text{if} \ i\neq 0.
 \end{cases}
 \end{align*}
 So the result holds when $\dim\Supp(M)=0.$

 Assume $r\geqq 1$ and result holds for all graded holonomic generalized Eulerian $A_{n+1}(K)$ modules $N$ with $\dim \Supp N \leq r-1.$

 As $\Gamma_{\m}(M)=E(K)^s(n+1)$ for some $s\geq 0,$ and we have an exact sequence
 \[0\longrightarrow \Gamma_{\m}(M)\longrightarrow M\longrightarrow \bar{M}\longrightarrow 0.\]
 Thus it suffices to prove the result for $\bar{M}.$ So we may  assume $\Gamma_{\m}(M) =0.$

 Notice  $(X_0, X_1, \cdots, X_{n})\not \in \Ass(M).$ As $\Ass(M)$ is a finite set and $K$ is infinite there exist
 \[\alpha_0X_0+\alpha_1X_1+\alpha_2X_2+\cdots + \alpha_sX_s \in K^{n+1} \backslash \cup_{P\in \Ass(M)}(P\cap K^{n+1}).\]
 Without loss of generality assume $\alpha_0\not=0.$ So after a homogeneous linear change of co-ordinates we can assume $X_0$ is a non-zero divisor on $M.$

 We have an exact sequence
 \[0 \longrightarrow M \longrightarrow M_{X_0} \longrightarrow H_{X_0}^1(M) \longrightarrow 0,\]
 Notice $\Supp(H_{X_0}^1(M)) \subseteq \Supp(M) \cap V(X_0),$ so $\dim\Supp H_{X_0}^1(M) \leq r-1.$ Thus by induction hypothesis
 \[\chi(\Bp, H_{X_0}^1(M)) =(-1)^{n+1}\chi(\Bx, H_{X_0}^1(M)).\]
 By Theorem  \ref{mthm}, $\chi(\Bp, M_{X_0})=0.$ Also note that $\chi(\Bx, M_{X_0})=0.$ We have
 \[\chi(\partial, M) = -\chi(\partial, H_{X_0}^1(M)).\]
 Similarly
 \[\chi(\Bx, M) = -\chi(\Bx, H_{X_0}^1(M)).\]
 So the result follows.
\end{proof}

\end{document}